\documentclass[reqno,10pt]{amsart}

\usepackage{amsmath,amssymb,amsthm,mathrsfs}
\usepackage{longtable}
\usepackage{enumerate}
\usepackage{graphicx} 
\usepackage{wasysym}

\usepackage{xcolor}

\allowdisplaybreaks

\theoremstyle{plain}
\newtheorem{theorem}{Theorem}[section]
\newtheorem{lemma}[theorem]{Lemma}
\newtheorem{proposition}[theorem]{Proposition}

\numberwithin{equation}{section}
\numberwithin{figure}{section}

\begin{document}

\title[Laguerre-Sobolev orthogonal Polynomials]{Laguerre-Sobolev orthogonal Polynomials and Boundary Value Problems on a semi-infinite domain}

\author[C. F. Bracciali and M. A.  Pi\~nar]
{Cleonice F. Bracciali and Miguel A. Pi\~nar}

\address[C. F. Bracciali]{Departamento de Matem\'{a}tica, IBILCE, UNESP - Universidade Estadual Paulista,
15054-000, S\~ao Jos\'e do Rio Preto, SP (Brazil)}
\email{cleonice.bracciali@unesp.br}

\address[M. A.  Pi\~nar]{Instituto de Matem\'aticas UGR -- IMAG \&
Departamento de Matem\'{a}tica Aplicada, Facultad de Ciencias. Universidad de Granada (Spain)}
\email{mpinar@ugr.es}

\thanks{This work was started while Miguel A. Pi\~nar was visiting Cleonice F. Bracciali at the Department of Mathematics of UNESP, Universidade Estadual Paulista, at S\~ao Jos\'e do Rio Preto, SP, Brazil. His stay in UNESP for a period of one month during october 2023, was supported by the Brazilian Federal Agency for Support and Evaluation of Graduate Education (CAPES), in the scope of the CAPES-PrInt International Cooperation Program, process number 88887.890314/2023-00. This author (MAP) is extremely grateful to the Department of Mathematics of UNESP for receiving all the necessary support to undertake this research.
The author CFB thanks the support of grants 88881.702910/2022-01 from CAPES and 2022/09575-5 from FAPESP of Brazil. The author MAP thanks grant PID2023.149117NB.I00 funded by MICIU/AEI/ 10.13039/501100011033 and ERDF \textit{A way of making Europe}.}

\date{\today}

\begin{abstract}
We study a family of Laguerre--Sobolev orthogonal polynomials associated with a Sobolev inner product arising from second--order boundary value problems on the semi--infinite interval $(0,+\infty)$. These polynomials generate an orthogonal basis of test functions vanishing at the endpoints and are especially well suited for the spectral approximation of Schr\"odinger--type problems with singular potentials. Explicit connection formulas with classical Laguerre polynomials are obtained, together with recurrence relations and asymptotic properties of the corresponding coefficients. A generating function involving Bessel functions is also derived. As an application, we develop a fully diagonalized Laguerre--Sobolev spectral method for Dirichlet problems with singular potentials. The method avoids the solution of linear systems and can be implemented recursively. Numerical experiments for a Schr\"odinger--type equation with inverse--distance potential confirm spectral accuracy and exponential convergence.
\end{abstract}

\subjclass[2010]{Primary: 33C47, 65L60, 65T50}

\keywords{Laguerre Polynomials, Sobolev Orthogonal Polynomials,  Boundary value Problems, Spectral Methods, Fourier Expansions}.

\maketitle

%%%%%%%%%%%%%%%%%%%%%%%%%%%%%%%%%%%%%%%%%%%%%%%%%%%%%%%%%%%%%%%%%%%%%%%%%%%%%%
\section{Introduction}
%%%%%%%%%%%%%%%%%%%%%%%%%%%%%%%%%%%%%%%%%%%%%%%%%%%%%%%%%%%%%%%%%%%%%%%%%%%%%%

Spectral methods based on orthogonal polynomials are a well--established tool for the numerical solution of differential equations, offering high accuracy and exponential convergence for smooth solutions; see, e.g., \cite{BM97,Canuto06,Funaro}. For problems posed on semi--infinite intervals, Laguerre and generalized Laguerre polynomials provide a natural approximation framework and have been extensively used in spectral and pseudospectral methods on $(0,+\infty)$ \cite{Boyd01,SW09}.  

Despite their effectiveness, classical Laguerre spectral methods may become less efficient when boundary conditions at infinity or singular coefficients are present in the differential operator. In such cases, Sobolev orthogonal polynomials offer a valuable alternative, as they incorporate derivative information and boundary behavior directly into the approximation space. This approach has led to the development of diagonalized spectral methods that avoid the solution of linear systems and significantly reduce computational complexity; see, for example, \cite{Ai18,FMPP24,LWL23}.  

In this paper, we introduce and analyze a family of Laguerre--Sobolev orthogonal polynomials associated with a Sobolev inner product naturally induced by a second--order differential operator on the half--line. The corresponding orthogonal functions form a basis of the space $\mathbb{P}\,x\,e^{-x/2}$ and satisfy homogeneous Dirichlet boundary conditions at $0$ and $+\infty$. This makes them particularly well suited for the spectral approximation of Schr\"odinger--type boundary value problems with singular potentials.  

We establish explicit connection formulas between the proposed Sobolev orthogonal polynomials and the classical Laguerre polynomials, derive recurrence relations for the connection coefficients, and obtain their asymptotic behavior using refined ratio asymptotics for Laguerre polynomials. These analytical results allow us to derive relative asymptotics and a closed generating function expressed in terms of Bessel functions.  

As an application, we develop a fully diagonalized Laguerre--Sobolev spectral method for second--order Dirichlet boundary value problems on $(0,+\infty)$. The resulting algorithm computes the Fourier--Sobolev coefficients recursively and requires only the evaluation of integrals involving classical Laguerre polynomials, leading to computational costs comparable to standard Laguerre methods while providing improved stability for singular problems. Numerical experiments for a non--homogeneous Schr\"odinger equation with inverse--distance potential confirm the spectral accuracy and exponential convergence of the proposed method.  

The paper is organized as follows. In Section~2 we recall basic properties of Laguerre polynomials. Section~3 is devoted to the construction and analysis of the Laguerre--Sobolev orthogonal polynomials, including connection formulas, asymptotic results and a generating function. In Section~4 we present the diagonalized spectral method for boundary value problems on the half--line. Numerical experiments illustrating the performance of the method are reported in Section~5.

%%%%%%%%%%%%%%%%%%%%%%%%%%%%%%%%%%%%%%%%%%%%%%%%%%%%%%%%%%%%%%%%%%%%%%
\section{
Laguerre orthogonal polynomials
}
%%%%%%%%%%%%%%%%%%%%%%%%%%%%%%%%%%%%%%%%%%%%%%%%%%%%%%%%%%%%%%%%%%%%%%%%

In this paper, we deal with the sequence of polynomials $\{L^{(\alpha)}_n\}_{n\geqslant0}$, $\alpha > -1,$ orthogonal with respect to the classical Laguerre inner product defined by
\begin{equation}\label{Laguerre_ip}
\langle f, g\rangle_{\alpha}
= \int_{0}^{+\infty} f(t) g(t)  t^\alpha\,e^{-t} dt,
\end{equation}
satisfying  $L^{(\alpha)}_n(0) = \binom{n+\alpha}{n}$. The properties of Laguerre polynomials are obtained from \cite{Szego75}.

Square of the norms $\|L^{(\alpha)}_n\|^2 =
\langle L^{(\alpha)}_n, L^{(\alpha)}_n\rangle_{\alpha}$ are given by
\begin{equation}\label{norm-Laguerre}
\|L^{(\alpha)}_n\|^2 = \frac{\Gamma(n+\alpha+1)}{n!}.
\end{equation}

The hypergeometric representation of orthogonal Laguerre polynomials can be expressed as

\begin{equation}\label{exp_exp_Poc}
L_n^{(\alpha)}(x) = \sum_{k=0}^n (-1)^k  \binom{n+\alpha}{n-k} \frac{x^{k}}{k!}.
\end{equation}

Using the hypergeometric representation \eqref{exp_exp_Poc}, we can see that the leading coefficient of Laguerre polynomials does not depend on $\alpha \in \mathbb{R}$, in fact
$$
L_n^{(\alpha)}(x) = (-1)^n \frac{x^n}{n!} + \ldots.
$$

Laguerre orthogonal polynomials $\{L_n^{(\alpha)}\}_{n\geqslant 0}$ satisfy the three-term recurrence relation
\begin{equation}\label{ttrr}
\begin{aligned}
& (n+1)\,L_{n+1}^{(\alpha)}(x) = (2n+1 + \alpha -x)L_{n}^{(\alpha)}(x) - (n+\alpha) \,L_{n-1}^{(\alpha)}(x), \quad n\geqslant 0,\\
& L_{-1}^{(\alpha)}(x) = 0, \quad L_0^{(\alpha)}(x) = 1.
\end{aligned}
\end{equation}

Laguerre polynomials satisfy the derivation formula
\begin{equation}\label{der_formula}
\frac{d}{dx}\,L^{(\alpha)}_n(x) = - \,L^{(\alpha+1)}_{n-1}(x), \quad n\geqslant 1.
\end{equation}

Moreover, Laguerre polynomials satisfy the structure relation
\begin{align}
&L^{(\alpha)}_{n} (x) = L^{(\alpha+1)}_{n} (x) - L^{(\alpha+1)}_{n-1} (x), \label{str_rel2}
\end{align}
for $n\geqslant 1$.

Outer strong asymptotics of Laguerre polynomials in the complex plane 
(in $\mathbb{C}\setminus\mathbb{R}_{+}$) as $n\to\infty$ is well known in the literature 
and usually is given by Perron's formula. For $\alpha>-1$ we get
\begin{align}
L_{n}^{(\alpha )}\left( z\right) =&\frac{1}{2\sqrt{\pi}}e^{z/2}\left(
-z\right) ^{-\alpha /2-1/4}n^{\alpha /2-1/4}e^{2\sqrt{ -nz}}\nonumber\\
& \times \left\{ \sum\limits_{k=0}^{d-1}C_{k}(z)n^{-k/2}+\mathcal{O}%
(n^{-d/2})\right\} \label{Perron}.
\end{align}

Here $C_{k}(z)$ is independent of $n$, and $C_0(k)=1$. This relation holds for $z$\ in the
complex plane with a cut along the positive real semiaxis. The bound for the
remainder holds uniformly in every closed domain of the complex plane with
empty intersection with $x>0$ (see \cite{Szego75}, Theorem 8.22.3).

As a direct consequence of \eqref{Perron} we have that for an arbitrary real $\alpha$
\begin{equation} \label{asymp_Lagguerre_1}
\lim_{n\to\infty} \frac{L_{n}^{(\alpha)} (z)}{L_{n-1}^{(\alpha)} (z)} = 1,
\end{equation}
uniformly on compact subsets of $\mathbb{C}\setminus [0,+\infty)$.

However, when dealing with ratio asymptotics usually relation \eqref{asymp_Lagguerre_1}
is not accurate enough. In \cite[Theorem 3]{DHM13} a more accurate asymptotic expansion as $n\rightarrow \infty $ of arbitrary ratios of Laguerre polynomials has been obtained.

\begin{theorem}
Let $\alpha,\beta>-1$, $j\in\mathbb{Z}$ and $z\in\mathbb{C}\setminus [0,\infty)$, and fix an integer $d\geq 1$, then the ratio of arbitrary Laguerre polynomials has the following asymptotic expansion as $n\to\infty$:
\begin{equation} \label{asymp_expan}
\frac{L_{n+j}^{(\alpha)}(z)}{L_{n}^{(\beta)}(z)}
=\left(-\frac{z}{n}\right)^{\frac{\beta-\alpha}{2}}\sum_{m=0}^{d-1} U_m(\alpha,\beta,j,z)n^{-m/2}+\mathcal{O}(n^{-d/2}),
\end{equation}
where the first coefficients are
\begin{align*}
U_0(\alpha,\beta,j,z)&=1,\\
U_1(\alpha,\beta,j,z)&=\frac{\beta^2-\alpha^2+2z(\beta-\alpha-2j)}{4\sqrt{-z}}.
\end{align*}

The error term holds uniformly for $z$ in compact sets of $\mathbb{C}\setminus[0,\infty)$.

\end{theorem}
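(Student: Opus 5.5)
The natural route is through Perron's formula \eqref{Perron}. It gives the full expansion of each Laguerre polynomial individually in powers of $n^{-1/2}$, uniformly on compact subsets of $\mathbb{C}\setminus[0,\infty)$, so the ratio can be handled by dividing two such expansions. I will write $w=\sqrt{-z}$, with the principal branch so that $\operatorname{Re} w>0$ off the cut.

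First apply \eqref{Perron} to the numerator with index $n+j$ and parameter $\alpha$, and to the denominator with index $n$ and parameter $\beta$. The common factors $\frac{1}{2\sqrt\pi}e^{z/2}$ cancel. What remains is
\[
(-z)^{(\beta-\alpha)/2}\,\frac{(n+j)^{\alpha/2-1/4}}{n^{\beta/2-1/4}}\,
e^{2w(\sqrt{n+j}-\sqrt n)}\,\frac{\sum_k C_k^{(\alpha)}(z)(n+j)^{-k/2}+\mathcal O(n^{-d/2})}{\sum_k C_k^{(\beta)}(z)n^{-k/2}+\mathcal O(n^{-d/2})}.
\]
Next, expand each factor in powers of $n^{-1/2}$.
\begin{itemize}
\item The power factor is $n^{(\alpha-\beta)/2}(1+j/n)^{\alpha/2-1/4}$. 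The leading part combines with $(-z)^{(\beta-\alpha)/2}$ to give the prefactor $(-z/n)^{(\beta-\alpha)/2}$. The correction $(1+j/n)^{\alpha/2-1/4}$ only contributes at order $n^{-1}$.
\item The exponent is $2w(\sqrt{n+j}-\sqrt n)=wj\,n^{-1/2}+\mathcal O(n^{-3/2})$, so the exponential equals $1+wj\,n^{-1/2}+\mathcal O(n^{-1})$.
\item The quotient of the bracketed sums is a ratio of two convergent-to-$1$ series in $n^{-1/2}$, since $C_0=1$. It can be expanded by the geometric series, and $(n+j)^{-k/2}$ can be re-expanded in $n^{-1/2}$. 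Uniformity in $z$ on compacts follows because the $C_k$ are analytic there and the denominator sum stays away from $0$ for large $n$.
\end{itemize}
Multiplying these expansions gives \eqref{asymp_expan} with $U_0=1$. It also shows that each $U_m$ is a polynomial in $w^{\pm1}$, built from $C_k^{(\alpha)}$, $C_k^{(\beta)}$ and $j$.

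For $U_1$, the order-$n^{-1/2}$ coefficient is
\[
wj+C_1^{(\alpha)}(z)-C_1^{(\beta)}(z).
\]
So I need the explicit $C_1$ from Perron's formula (Szeg\H{o}, Theorem 8.22.3). The standard value is
\[
C_1^{(\alpha)}(z)=\frac{1}{\sqrt{-z}}\Big(\tfrac{z^2}{12}\cdot\ldots\Big),
\]
more concretely of the form $\frac{-\alpha^2/4+\ldots+z\alpha/2+\ldots}{w}$ plus $\alpha$-independent terms. The $\alpha$-independent terms cancel in the difference, leaving
\[
\frac{\beta^2-\alpha^2+2z(\beta-\alpha)}{4w}.
\]
Then $wj=-2zj/(2w)\cdot\tfrac12\cdot 2$, i.e. $wj=\frac{-4zj}{4w}$ since $w^2=-z$. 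This contributes the $-2z\cdot 2j$ term and reproduces the stated $U_1$.

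The main obstacle is pinning down $C_1$ precisely. If the citation is insufficient, I would rederive it from the differential equation $zy''+(\alpha+1-z)y'+ny=0$. The method is to substitute the ansatz $e^{z/2}(-z)^{-\alpha/2-1/4}e^{2\sqrt{-nz}}(1+C_1 n^{-1/2}+\dots)$ and match terms at order $n^{0}$, which yields a first-order ODE for $C_1$. This fixes $C_1$ up to an additive constant independent of $\alpha$, and that constant cancels in the difference. Finally, the claimed form of $U_1$ can be checked against the elementary case $\alpha=\beta$, $j=1$ via the three-term recurrence \eqref{ttrr}.
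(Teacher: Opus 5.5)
Your strategy of dividing two copies of Perron's formula \eqref{Perron} is the natural one; the paper itself gives no proof and quotes the theorem from the literature. Your bookkeeping is correct: the prefactor, the uniformity, $U_0=1$, and $U_1=wj+C_1^{(\alpha)}(z)-C_1^{(\beta)}(z)$ with $w=\sqrt{-z}$.

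The gap is at the step you yourself call the main obstacle: you never determine $C_1$. The ``standard value'' you display is a placeholder. The form you do write, with $\alpha$-dependent part $(-\alpha^2/4+z\alpha/2)/w$, would give $2z(\alpha-\beta)$, the opposite sign from what you then claim. The correct $\alpha$-dependent part is $(-\alpha^2/4-z\alpha/2)/w=\frac{\alpha w}{2}-\frac{\alpha^2}{4w}$.

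Your fallback via the differential equation does fix the $z$-dependence. Put $x=-z$ and write $L_n^{(\alpha)}=\frac{1}{2\sqrt\pi}e^{-x/2}x^{-\alpha/2-1/4}n^{\alpha/2-1/4}e^{2\sqrt{nx}}\,g$. Then $g$ satisfies $xg''+(2\sqrt{nx}+\tfrac12)g'+Qg=0$ with $Q=-\frac{x}{4}-\frac{\alpha+1}{2}+\frac{1-4\alpha^2}{16x}$. Hence $2\sqrt{x}\,C_1'=-Q$, and
\[
C_1^{(\alpha)}(z)=\frac{w^3}{12}+\frac{(\alpha+1)w}{2}+\frac{1-4\alpha^2}{16w}+c_\alpha .
\]
However, the integration constant $c_\alpha$ is not determined by the ODE. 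It is fixed by the normalization of $L_n^{(\alpha)}$, namely which solution is taken and the exact prefactor $\frac{1}{2\sqrt\pi}n^{\alpha/2-1/4}$. Since the ODE itself depends on $\alpha$, nothing you say prevents $c_\alpha$ from depending on $\alpha$. Your claim that it is $\alpha$-independent is therefore unsupported, yet $U_1$ depends on it through $c_\alpha-c_\beta$. The check you propose ($\alpha=\beta$, $j=1$) cannot detect this, because there $C_1^{(\alpha)}-C_1^{(\beta)}\equiv 0$ identically.

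To close the gap you need an independent normalization. A rigorous partial fix uses the structure relation \eqref{str_rel2}, written as $L_n^{(\alpha)}/L_n^{(\alpha+1)}=1-L_{n-1}^{(\alpha+1)}/L_n^{(\alpha+1)}$. Expanding both sides with \eqref{Perron} to order $n^{-1}$ gives $C_1^{(\alpha)}-C_1^{(\alpha+1)}=-\frac{w}{2}+\frac{2\alpha+1}{4w}$, i.e.\ $c_\alpha=c_{\alpha+1}$. Telescoping then yields the stated $U_1$ whenever $\beta-\alpha\in\mathbb{Z}$. This covers the only use in the paper ($\alpha=\beta=1$), where $C_1$ cancels anyway.

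For arbitrary real $\alpha,\beta$ you must actually compute the constant. One way is to carry Laplace's method to relative order $n^{-1/2}$ in
\[
e^{s}L_n^{(\alpha)}(-s)=\frac{s^{-\alpha/2}}{n!}\int_0^\infty e^{-t}t^{n+\alpha/2}I_\alpha(2\sqrt{ts})\,dt,\qquad s=-z>0,
\]
and then continue analytically in $z$. This gives $c_\alpha=0$, so your target formula is correct, but your argument as written does not establish it.
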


\medskip
 
For generating functions involving Laguerre polynomials, the following  result is known as Hardy-Hille formula 
(see \cite[Theorem 5.1, p. 102]{Szego75}).

\begin{theorem} \label{lema_gen} 
Let $J_{\alpha}$ be the Bessel function of the first kind of order $\alpha$, then
\begin{align}
\sum_{n=0}^{\infty} &\binom{n+\alpha}{n}^{-1}
L^{(\alpha)}_{n}(x) L^{(\alpha)}_{n}(y) \omega^n = \nonumber \\
&= \frac{\Gamma(\alpha+1)}{1-\omega} e^{- \frac{(x+y)\omega}{1-\omega}
} (-xy\omega)^{-\frac{\alpha}{2}} J_{\alpha} \left(  \frac{2(-xy\omega)^{\frac{1}{2}}}{1-\omega}\right). \label{gen_fun_Laguerre}
\end{align}
\end{theorem}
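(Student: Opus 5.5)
We prove the Hardy--Hille formula \eqref{gen_fun_Laguerre} by expanding the left-hand side as a double power series and recognizing the Bessel series on the right. We fix $|\omega|<1$ and $x,y\ge 0$ first (the general case follows by analytic continuation, since both sides are entire in $x,y$ and analytic in $|\omega|<1$). The starting point is the classical generating function
\[
\sum_{n\ge0} L_n^{(\alpha)}(x)\,\omega^n=(1-\omega)^{-\alpha-1}e^{-x\omega/(1-\omega)},
\]
which follows from \eqref{exp_exp_Poc} by interchanging summations and using the binomial series $\sum_{n\ge k}\binom{n+\alpha}{n-k}\omega^{n}=\omega^k(1-\omega)^{-\alpha-k-1}$.

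The key intermediate identity is the product (Bailey-type) expansion
\[
\frac{L_n^{(\alpha)}(x)L_n^{(\alpha)}(y)}{L_n^{(\alpha)}(0)}=\frac{\Gamma(n+\alpha+1)}{n!}\sum_{k=0}^{n}\frac{(xy)^k}{k!\,\Gamma(k+\alpha+1)}\,\frac{L^{(\alpha+2k)}_{n-k}(x+y)}{\,\cdot\,}\ \text{(up to the precise normalization)},
\]
but rather than rely on this, we proceed more directly. We use the integral (Koshlyakov) representation: for $\alpha>-1$,
\[
\binom{n+\alpha}{n}^{-1}L_n^{(\alpha)}(y)\,e^{-y}y^{\alpha/2}=\frac{\Gamma(\alpha+1)\,n!}{\Gamma(n+\alpha+1)}\cdot\frac{1}{n!}\int_0^\infty e^{-t}t^{n+\alpha/2}J_\alpha(2\sqrt{ty})\,dt,
\]
which is checked by expanding $J_\alpha$ in its power series, integrating term by term with the Gamma integral, and comparing with \eqref{exp_exp_Poc}. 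We substitute this for one factor, multiply by $L_n^{(\alpha)}(x)\omega^n$, and sum over $n$ inside the integral (justified by dominated convergence for $|\omega|<1$ small, then analytic continuation). Then the classical generating function applied with $\omega t$ in place of $\omega$ is not directly available because the $t^n$ factor enters as $(\omega t)^n L_n^{(\alpha)}(x)/n!$. Therefore we use instead the second classical generating function
\[
\sum_{n\ge0}\frac{L_n^{(\alpha)}(x)}{\Gamma(n+\alpha+1)}s^n=e^{s}(xs)^{-\alpha/2}J_\alpha(2\sqrt{xs}),
\]
again checked termwise from \eqref{exp_exp_Poc}. This reduces the left-hand side to a Weber-type integral
\[
C\int_0^\infty e^{-(1-\omega)t}\,t^{\alpha}\,J_\alpha(2\sqrt{xt\omega})\,J_\alpha(2\sqrt{yt})\,(\ldots)\,dt.
\]
After the change of variables $t=u^2$, this becomes Weber's second exponential integral $\int_0^\infty e^{-p^2u^2}J_\alpha(au)J_\alpha(bu)u\,du=\frac{1}{2p^2}e^{-(a^2+b^2)/(4p^2)}I_\alpha\!\left(\frac{ab}{2p^2}\right)$. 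With $p^2=1-\omega$, $a^2=4x\omega$, $b^2=4y$, this yields exactly the exponential $e^{-(x+y)\omega/(1-\omega)}$ (after absorbing the $e^{-y}$ factor) and $I_\alpha(2\sqrt{xy\omega}/(1-\omega))=(-1)^{\cdot}J_\alpha(2\sqrt{-xy\omega}/(1-\omega))$ up to the power prefactor $(-xy\omega)^{-\alpha/2}$.

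The main obstacle is bookkeeping: tracking the constants, the powers of $x,y,\omega$, and the factor $e^{-y}$ through the substitutions so that the result is precisely $\Gamma(\alpha+1)(1-\omega)^{-1}$ times the stated expression. Weber's integral can also be proved by power-series expansion of one Bessel function, term-by-term integration, and resumming a Laguerre series, so the argument is self-contained. As a fallback, a purely formal proof can compare coefficients of $x^jy^k$ on both sides using \eqref{exp_exp_Poc} and the Bessel series; this reduces to a terminating hypergeometric (Chu--Vandermonde) identity.
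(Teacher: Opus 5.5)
The paper gives no proof of this statement; it quotes the Hardy--Hille formula from Szeg\H{o}'s book. Your main argument supplies one, and it is correct. (Your opening sentence describes the fallback, not what you actually do.) Your route is the standard textbook derivation:
\begin{itemize}
\item the Bessel integral representation $e^{-y}y^{\alpha/2}L_n^{(\alpha)}(y)=\frac{1}{n!}\int_0^\infty e^{-t}t^{n+\alpha/2}J_\alpha(2\sqrt{ty})\,dt$;
\item the generating function $\sum_n L_n^{(\alpha)}(x)s^n/\Gamma(n+\alpha+1)=e^{s}(xs)^{-\alpha/2}J_\alpha(2\sqrt{xs})$;
\item Weber's second exponential integral;
\item analytic continuation.
\end{itemize}
Compared with the bare citation, it makes explicit the continuation from $x,y>0$ to complex arguments. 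That is exactly the case the paper needs later, since it applies the formula at $y=-4\lambda<0$.

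The continuation is cheaper than you suggest. The coefficient of $\omega^n$ on the left is a polynomial in $x,y$. The right-hand side is entire in $x,y$ and analytic in $|\omega|<1$. So agreement of Taylor coefficients for $x,y>0$ extends to all $x,y$. The left-hand series is then the Taylor series of an analytic function, so it converges on the whole disc $|\omega|<1$.

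The bookkeeping you postponed does close, but your displayed Weber-type integral is wrong as written. There is no $t^\alpha$ and no extra factor, because the $t^{\alpha/2}$ from the integral representation cancels the $t^{-\alpha/2}$ in $(x\omega t)^{-\alpha/2}$. For $x,y>0$ and $0<\omega<1$ you get
\[
e^{-y}y^{\alpha/2}\sum_{n=0}^{\infty}\binom{n+\alpha}{n}^{-1}L_n^{(\alpha)}(x)L_n^{(\alpha)}(y)\,\omega^n=\Gamma(\alpha+1)\,(x\omega)^{-\alpha/2}\int_0^\infty e^{-(1-\omega)t}\,J_\alpha\bigl(2\sqrt{x\omega t}\bigr)\,J_\alpha\bigl(2\sqrt{yt}\bigr)\,dt .
\]
It is precisely the absence of a power of $t$ that makes $t=u^2$ produce Weber's measure $u\,du$. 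With a $t^\alpha$ present, Weber's formula would not apply.

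Now take $p^2=1-\omega$, $a=2\sqrt{x\omega}$, $b=2\sqrt{y}$. The factor $2$ from $dt=2u\,du$ cancels the $2$ in $\frac{1}{2p^2}$, leaving $\Gamma(\alpha+1)/(1-\omega)$. The exponent is $y-\frac{x\omega+y}{1-\omega}=-\frac{(x+y)\omega}{1-\omega}$.

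No sign appears in passing from $I_\alpha$ to $J_\alpha$. Both $(xy\omega)^{-\alpha/2}I_\alpha\bigl(\frac{2\sqrt{xy\omega}}{1-\omega}\bigr)$ and $(-xy\omega)^{-\alpha/2}J_\alpha\bigl(\frac{2\sqrt{-xy\omega}}{1-\omega}\bigr)$ equal $(1-\omega)^{-\alpha}\sum_{k\ge0}\frac{(xy\omega)^k}{k!\,\Gamma(k+\alpha+1)(1-\omega)^{2k}}$.

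Minor points:
\begin{itemize}
\item The termwise check of the integral representation yields $y^{\alpha/2}\sum_k\binom{n+k+\alpha}{n}(-y)^k/k!$. Matching this with the explicit sum for $L_n^{(\alpha)}$ needs Kummer's transformation, not just a direct comparison.
\item The product formula you wrote and abandoned is, correctly normalized, $\frac{n!}{\Gamma(n+\alpha+1)}L_n^{(\alpha)}(x)L_n^{(\alpha)}(y)=\sum_{k=0}^{n}\frac{(xy)^k}{k!\,\Gamma(k+\alpha+1)}\,L_{n-k}^{(\alpha+2k)}(x+y)$. Combined with your first generating function it gives the result in two lines, though the work then moves into proving that product formula.
\item The coefficient-comparison fallback leads to a balanced terminating ${}_3F_2$ at $1$. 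What it needs is Pfaff--Saalsch\"utz rather than Chu--Vandermonde.
\end{itemize}
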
.

%%%%%%%%%%%%%%%%%%%%%%%%%%%%%%%%%%%%%%%%%%%%
\section{
Laguerre--Sobolev orthogonal polynomials
}
%%%%%%%%%%%%%%%%%%%%%%%%%%%%%%%%%%%%%%%%%%%%

The problem we wish to address is the approximate solution of the boundary value problem (BVP, in short) for an ordinary differential equation associated with a stationary Schr\"odinger equation with a potential $V (x) = 1/x$,
\begin{equation}\label{BVP}
\begin{aligned}
& - u '' + \lambda\,\frac{1}{x}\,u = f(x), \\
& u(0)= \lim_{x \to +\infty}u(x) = 0,
\end{aligned}
\end{equation}
where $\lambda >0$. This problem, as usual, can be studied from a variational perspective by considering the Sobolev inner product
\begin{equation}\label{Sobolev_IP}
\langle u, v\rangle_\lambda = \lambda \,\int_{0}^{+\infty} u(x)\,v(x)\,\frac{1}{x}\,dx + \int_{0}^{+\infty} u'(x)\,v'(x)\,dx,
\end{equation}
associated with the variational formulation of \eqref{BVP}.

Let $\mathbb{P}$ denote the linear space of the real polynomials. The test functions for \eqref{BVP} should be chosen in the linear space $\mathbb{P}\, x\,e^{-x/2}$ of functions vanishing at the ends of the interval $[0,+\infty)$.  A basis for the space $\mathbb{P}\, x\,e^{-x/2}$ can be constructed from a family of orthogonal polynomials connected to the Sobolev inner product \eqref{Sobolev_IP}. In this section, we generate such orthogonal basis and study its properties.

Let us denote by $\{ S_{n}\, x\,e^{-x/2}\}_{n\geqslant0}$ the sequence of orthogonal functions obtained by applying the Gram-Schmidt process to the sequence
$\{x^k\, x \, e^{-x/2}\}_{k\geqslant0}$,
with respect to the Sobolev inner product $\langle \cdot, \cdot \rangle_\lambda$ defined in \eqref{Sobolev_IP}, where the polynomial $S_n(x)$ has the same leading coefficient as the Laguerre polynomial
$L_{n}^{(1)} (x)$, in this way
$$
S_{n}(x) = (-1)^n \frac{x^n}{n!} + \ldots
$$

Integration by parts gives
\begin{align*}
\langle S_n & x\,e^{-x/2},   S_m x\,e^{-x/2}\rangle_\lambda = \lambda \int_{0}^{+\infty} S_n(x) S_m(x) x\,e^{-x} dx\\
&+ \int_{0}^{+\infty} [S_n(x)(1-\frac{x}{2} + x S'_n(x)][S_m(x)(1-\frac{x}{2} + x S'_m (x)]\,e^{-x}dx\\
= & \int_{0}^{+\infty} ( 1 + \lambda - \frac{1}{4} x) S_n(x) S_m(x)\, x\,e^{-x}dx \\
& +\int_{0}^{+\infty} S'_n(x) S'_m(x) x^2\,e^{-x} dx.
\end{align*}

Therefore, the sequence of polynomials $\{S_{n}\}_{n\geqslant0}$ is orthogonal with respect to the Sobolev inner product
\begin{equation}\label{s1}
\begin{aligned}
\langle p, q \rangle_S= &\int_{0}^{+\infty} p(x) q(x)  ( 1 + \lambda - \frac{1}{4} x) \, x\,e^{-x}dx
\\
& + \int_{-1}^{1} p'(x) q'(x) x^2\,e^{-x} dx.
\end{aligned}
\end{equation}

Observe that, despite the polynomial $1 + \lambda - \frac{1}{4} x$ changes his sign inside the interval $[0, +\infty)$, the bilinear form \eqref{s1} is still positive because
$$
\langle p(x), p(x) \rangle_S = \langle p(x) x\,e^{-x/2},   p(x) x\,e^{-x/2}\rangle_\lambda >0,
$$
for $p(x) \neq 0$.

\medskip

First, we are going to obtain a connection formula between test functions and classical Laguerre polynomials.

\begin{proposition}\label{connect}
The following relation holds:
\begin{equation}\label{conechat}
L_{n}^{(1)} (x)= S_{n}(x) + {a}_{n-1}S_{n-1}(x), \quad n\geqslant 1,
\end{equation}
with
\begin{equation} \label{a_n}
a_{n-1} = \frac{n}{4}\frac{\|L^{(1)}_n\|^2}{\langle S_{n-1}, S_{n-1} \rangle_S} > 0.
\end{equation}

\end{proposition}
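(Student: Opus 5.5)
The plan is to expand $L^{(1)}_n$ in the Sobolev orthogonal basis and show that almost all Fourier coefficients vanish. Since $S_n$ and $L^{(1)}_n$ have the same leading coefficient, we can write
\[
L^{(1)}_n(x) = S_n(x) + \sum_{k=0}^{n-1} c_{n,k} S_k(x), \qquad c_{n,k} = \frac{\langle L^{(1)}_n, S_k\rangle_S}{\langle S_k, S_k\rangle_S}.
\]
It is therefore enough to compute $\langle L^{(1)}_n, p\rangle_S$ for an arbitrary polynomial $p$ of degree at most $n-1$.

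The key step is an integration by parts in the derivative term of \eqref{s1}, taken over $(0,+\infty)$ (the $\int_{-1}^{1}$ there is evidently a typo). Using $\frac{d}{dx}(x^2e^{-x}) = (2-x)\,x\,e^{-x}$ and the vanishing of the boundary terms at $0$ and $\infty$, we get
\[
\int_0^{\infty} (L^{(1)}_n)'\,p'\,x^2e^{-x}\,dx = -\int_0^\infty p\,\Bigl[x\,(L^{(1)}_n)'' + (2-x)(L^{(1)}_n)'\Bigr]\,x\,e^{-x}\,dx .
\]
The Laguerre differential equation $x y'' + (\alpha+1-x)y' + n y = 0$ with $\alpha=1$ turns the bracket into $-n L^{(1)}_n$. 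Hence
\[
\langle L^{(1)}_n, p\rangle_S = \int_0^\infty \Bigl(1+\lambda+n-\tfrac{x}{4}\Bigr) L^{(1)}_n\,p\,x\,e^{-x}\,dx = -\tfrac14 \langle L^{(1)}_n, x\,p\rangle_1,
\]
where the constant part drops out by the orthogonality of $L^{(1)}_n$ against $\deg p \le n-1$. The Laguerre differential equation is not stated in the paper. If we want to rely only on stated facts, it can be derived from \eqref{der_formula} together with \eqref{str_rel2}, or simply cited from \cite{Szego75}.

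Now $\langle L^{(1)}_n, x\,p\rangle_1 = 0$ whenever $\deg p \le n-2$, so $c_{n,k}=0$ for $k\le n-2$. This gives \eqref{conechat}. For $p = S_{n-1}$, the polynomial $x\,S_{n-1}$ has leading term $(-1)^{n-1}x^n/(n-1)! = -n\,\bigl[(-1)^n x^n/n!\bigr]$. Orthogonality then gives $\langle L^{(1)}_n, xS_{n-1}\rangle_1 = -n\|L^{(1)}_n\|^2$, and so
\[
\langle L^{(1)}_n, S_{n-1}\rangle_S = \tfrac{n}{4}\|L^{(1)}_n\|^2 ,
\]
which is exactly \eqref{a_n}. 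Positivity follows because $\|L^{(1)}_n\|^2 = (n+1)$ by \eqref{norm-Laguerre}, and $\langle S_{n-1},S_{n-1}\rangle_S>0$ by the positivity of the form noted above.

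The main obstacle is the integration-by-parts step. There we have to justify the boundary terms: $x^2e^{-x}$ times polynomials vanishes at both ends. We also have to apply the correct form of the Laguerre differential equation for $\alpha=1$, taking care with signs. Everything after that is bookkeeping of leading coefficients.
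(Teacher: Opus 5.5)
Your proof is correct and has the same skeleton as the paper's. You expand $L^{(1)}_n$ in the $S_k$, show that only the $-\tfrac14 x$ part of the weight survives, and get $\langle L^{(1)}_n,S_{n-1}\rangle_S=\tfrac n4\|L^{(1)}_n\|^2$ by comparing leading coefficients.

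The one real difference is how you handle the derivative term. The paper writes $(L^{(1)}_n)'=-L^{(2)}_{n-1}$ by \eqref{der_formula}. It then notes that $\int_0^{\infty}L^{(2)}_{n-1}S_i'\,x^2e^{-x}\,dx=0$ by the orthogonality of $L^{(2)}_{n-1}$ with respect to $x^2e^{-x}$, since $\deg S_i'\leqslant n-2$. This takes one line, uses only stated facts, and involves no boundary terms.

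Your integration by parts with the Laguerre equation is slightly heavier but proves more. It gives $\int_0^{\infty}(L^{(1)}_n)'p'\,x^2e^{-x}\,dx=n\langle L^{(1)}_n,p\rangle_1$ for every polynomial $p$. Hence $\langle L^{(1)}_n,p\rangle_S=\langle(1+\lambda+n-\tfrac x4)L^{(1)}_n,p\rangle_1$ holds with no degree restriction. Taking $p=L^{(1)}_n$ together with \eqref{ttrr} then gives at once $\langle L^{(1)}_n,L^{(1)}_n\rangle_S=(n+1)(\lambda+\tfrac{n+1}{2})$, which is the quantity the paper needs in the next lemma.

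Two small remarks. First, the differential equation does not follow from \eqref{der_formula} and \eqref{str_rel2} alone. Both relations stay true if every $L^{(\alpha)}_m$ is replaced by $L^{(\alpha+c)}_m$, whereas the equation does not. So you need an input such as \eqref{ttrr} or \eqref{exp_exp_Poc}, or you can simply cite \cite{Szego75}. Second, if you only want the vanishing of the derivative term, integrate by parts the other way, moving the derivative onto $p$. This gives $-\int_0^{\infty}L^{(1)}_n\,[xp''+(2-x)p']\,x e^{-x}\,dx$, which is zero by orthogonality because the bracket has degree at most $\deg p\leqslant n-1$; no differential equation is needed.
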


\begin{proof}
The Fourier expansion of $L_n^{(1)}$ in terms of the Sobolev orthogonal polynomials $\{S_n\}_{n\geqslant 0}$ yields
$$
L_n^{(1)}(x)= S_n(x)+\sum_{i=0}^{n-1} a_{n,i} S_i(x),
$$
where
$$
a_{n,i}=\frac{\langle L_n^{(1)}, S_i \rangle_S }{\|S_i \|_S^2}.
$$
Taking into account \eqref{s1} and the orthogonality relations of the Laguerre polynomials $L^{(1)}_{n}$ we get, for $i\leqslant n-1$,
$$
\begin{aligned}
\langle L_n^{(1)}, S_i \rangle_S = &  \int_{0}^{+\infty} L^{(1)}_{n}(x) S_i(x) ( 1 + \lambda - \frac{1}{4} x) \, x\,e^{-x}dx  \\
& - \int_{0}^{+\infty} L^{(2)}_{n-1}(x) S'_i(x) \, x^2\,e^{-x}dx \\
= & \int_{0}^{+\infty} L^{(1)}_{n}(x) S_i(x) ( 1 + \lambda - \frac{1}{4} x) \, x\,e^{-x}dx.
\end{aligned}
$$
Therefore $a_{n,i}=0, i<n-1$ and \eqref{conechat} holds.

For $i = n-1$
$$
\begin{aligned}
\langle L_n^{(1)}, S_{n-1} \rangle_S = &  \int_{0}^{+\infty} L^{(1)}_{n}(x) S_{n-1}(x) ( 1 + \lambda - \frac{1}{4} x) \, x\,e^{-x}dx  \\
& - \int_{0}^{+\infty} L^{(2)}_{n-1}(x) S'_{n-1}(x) \, x^2\,e^{-x}dx \\
= & - \frac{1}{4}\int_{0}^{+\infty} L^{(1)}_{n}(x) S_{n-1}(x) \, x \, x\,e^{-x}dx\\
= & \frac{n}{4} \,  \|L^{(1)}_n\|^2 >0.
\end{aligned}
$$
\end{proof}

Sobolev polynomials which are orthogonal with respect to \eqref{s1} are related to the nondiagonal Laguerre--Sobolev orthogonal polynomials defined in \cite{MMB97}. Both families satisfy a similar connection formula \eqref{conechat}, but the similarities end there since their asymptotic properties differ.

\medskip

\begin{lemma} The sequence $\{a_n\}_{n\geq 0}$ in \eqref{conechat} satisfies the recurrence relation
\begin{equation} \label{rec_rel_an}
a_n = \frac{n+2}{4 \lambda + 2(n+1) - n a_{n-1}}
\end{equation}
with
$$
a_0 = \frac{2}{4 \lambda + 2},
$$
and $0 < a_n < 1$ for all $n = 0, 1, \ldots$
\end{lemma}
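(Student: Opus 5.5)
The plan is to turn the formula \eqref{a_n} for $a_{n-1}$ into a recursion by computing $\langle S_n,S_n\rangle_S$ from the connection formula \eqref{conechat}. Then I will get the bounds by induction.

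First, by \eqref{norm-Laguerre}, $\|L^{(1)}_n\|^2=\Gamma(n+2)/n!=n+1$. Shifting the index in \eqref{a_n} gives
\begin{equation*}
a_n=\frac{(n+1)(n+2)}{4\,\langle S_n,S_n\rangle_S}, \qquad n\geqslant 0 .
\end{equation*}
For $n=0$ we have $S_0=L^{(1)}_0=1$. Hence $\langle S_0,S_0\rangle_S=(1+\lambda)\cdot 1-\frac14\int_0^\infty x^2e^{-x}\,dx=\lambda+\frac12$, and so $a_0=2/(4\lambda+2)$.

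For $n\geqslant1$ I use $S_n=L^{(1)}_n-a_{n-1}S_{n-1}$ and the orthogonality of $S_n$ to all polynomials of lower degree. This gives
\begin{equation*}
\langle S_n,S_n\rangle_S=\langle L^{(1)}_n,S_n\rangle_S=\langle L^{(1)}_n,L^{(1)}_n\rangle_S-a_{n-1}\langle L^{(1)}_n,S_{n-1}\rangle_S .
\end{equation*}
The last inner product was computed in the proof of Proposition~\ref{connect}: it equals $\frac n4(n+1)$.

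The only genuine computation is $\langle L^{(1)}_n,L^{(1)}_n\rangle_S$, which splits into three integrals:
\begin{itemize}
\item The term with weight $(1+\lambda)\,x e^{-x}$ gives $(1+\lambda)(n+1)$.
\item For the term with weight $-\frac14\, x\cdot x e^{-x}$, I use the three-term recurrence \eqref{ttrr} with $\alpha=1$ to expand $xL^{(1)}_n=(2n+2)L^{(1)}_n-(n+1)L^{(1)}_{n+1}-(n+1)L^{(1)}_{n-1}$. Orthogonality then gives $\int_0^\infty x\,(L^{(1)}_n)^2\,x e^{-x}\,dx=2(n+1)^2$.
\item For the derivative term, \eqref{der_formula} and \eqref{norm-Laguerre} with $\alpha=2$ give $\int_0^\infty (L^{(2)}_{n-1})^2x^2e^{-x}\,dx=n(n+1)$.
\end{itemize}
Altogether $\langle L^{(1)}_n,L^{(1)}_n\rangle_S=(n+1)\bigl(\lambda+\frac{n+1}{2}\bigr)$. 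Therefore
\begin{equation*}
\langle S_n,S_n\rangle_S=(n+1)\Bigl(\lambda+\frac{n+1}{2}-\frac{n\,a_{n-1}}{4}\Bigr).
\end{equation*}
Substituting this into the expression for $a_n$ yields exactly \eqref{rec_rel_an}.

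Positivity of every $a_n$ is already given by Proposition~\ref{connect}. It also follows from $\langle S_n,S_n\rangle_S>0$, which means the denominator in \eqref{rec_rel_an} is positive.

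For the upper bound I argue by induction on $n$. The base case holds because $a_0=2/(4\lambda+2)<1$, since $\lambda>0$. For the inductive step, assume $0<a_{n-1}<1$. Then the denominator satisfies
\begin{equation*}
4\lambda+2(n+1)-n a_{n-1}>4\lambda+n+2>n+2,
\end{equation*}
again because $\lambda>0$. Hence $a_n<1$.

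I expect the main obstacle to be bookkeeping rather than anything conceptual. The crucial step is the value of the moment $\int_0^\infty x\,(L^{(1)}_n)^2\,x e^{-x}\,dx$ obtained via \eqref{ttrr}, because it is the term that produces the coefficient $2(n+1)$ in the denominator.
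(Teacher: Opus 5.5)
Your proof is correct and is essentially the paper's argument. The paper pairs the iterated connection formula $L^{(1)}_{n+1}=S_{n+1}+a_nL^{(1)}_n-a_{n-1}a_nS_{n-1}$ with $L^{(1)}_n$, which amounts to $\langle L^{(1)}_{n+1},L^{(1)}_n\rangle_S=a_n\langle S_n,S_n\rangle_S$ (i.e.\ \eqref{a_n} again), whereas you insert $\langle S_n,S_n\rangle_S=\langle L^{(1)}_n,L^{(1)}_n\rangle_S-a_{n-1}\langle L^{(1)}_n,S_{n-1}\rangle_S$ directly into \eqref{a_n}; both routes use the same two inner products, and your induction reproduces the paper's bound $a_n<(n+2)/(4\lambda+n+2)<1$.
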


\begin{proof}
Iterating relation \eqref{conechat} we get
\begin{align*}
L_{n+1}^{(1)} (x) &= S_{n+1}(x) + {a}_{n}S_{n}(x)\\
&= S_{n+1}(x) + {a}_{n}L_{n}^{(1)}(x) - {a}_{n-1}{a}_{n}S_{n-1}(x),
\end{align*}
which gives
\begin{align*}
\langle L_{n+1}^{(1)} (x), L_{n}^{(1)}(x) \rangle_S &= \langle S_{n+1}(x) , L_{n}^{(1)}(x) \rangle_S\\
&+ a_n \langle L_{n}^{(1)}(x), L_{n}^{(1)}(x) \rangle_S - {a}_{n-1}{a}_{n} \langle S_{n-1}(x), L_{n}^{(1)}(x) \rangle_S.
\end{align*}
Now, a direct computation shows
\begin{align*}
\langle S_{n+1}(x), L_{n}^{(1)}(x) \rangle_S &= 0, \\
\langle L_{n+1}^{(1)} (x), L_{n}^{(1)}(x) \rangle_S &= \frac{n+1}{4} \,  \|L^{(1)}_{n+1}\|^2,\\
\langle S_{n-1}(x), L_{n}^{(1)}(x) \rangle_S &= \frac{n}{4} \,  \|L^{(1)}_n\|^2,
\end{align*}
and the three-term recurrence relation \eqref{ttrr} gives
$$
\langle L_{n}^{(1)} (x), L_{n}^{(1)}(x) \rangle_S = (\lambda+1)   \|L^{(1)}_{n}\|^2
+  \|L^{(2)}_{n-1}\|^2 - \frac{1}{4} (2n+2)  \|L^{(1)}_{n}\|^2.
$$

In this way, from \eqref{norm-Laguerre} we conclude
$$
n+2 = a_n \left( 4 \lambda + 2n +2 \right) - n \, a_n a_{n-1},
$$
and \eqref{rec_rel_an} holds.

Finally, from \eqref{a_n} we get
$$
a_{0} = \frac{1}{4}\frac{\|L^{(1)}_1\|^2}{\langle S_{0}, S_{0} \rangle_S} = \frac{1}{2\lambda + 1 } < 1,
$$
and induction on $n$ gives
$$
a_n < \frac{n+2}{4 \lambda + n  + 2} < 1
$$
for all $n = 0, 1, \ldots$
\end{proof}

\medskip
The coefficients $\{a_n\}_{n\geq 0}$ in \eqref{conechat} can be explicitly expressed in terms of Laguerre polynomias and therefore we can get their first order asymptotic expansion.

\begin{lemma} The sequence $\{a_n\}_{n\geq 0}$ in \eqref{conechat} satisfies
\begin{equation} \label{rat_func_an}
a_n  = \frac{n+2}{n+1} \frac{L^{(1)}_{n}(-4\lambda)}{ L^{(1)}_{n+1}(-4\lambda)}, \quad
\end{equation}
for $n\geqslant 0$, and therefore
\begin{equation} \label{lim_an}
 a_n =1 - \frac{\sqrt{4\lambda}}{\sqrt{n}} +\mathcal{O}(\frac{1}{n}) .
\end{equation}
\end{lemma}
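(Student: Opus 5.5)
We prove \eqref{rat_func_an} by showing that the proposed closed form satisfies the recurrence \eqref{rec_rel_an} with the correct initial value, and then obtain \eqref{lim_an} from the ratio asymptotics \eqref{asymp_expan}.

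\textbf{The closed form.} Set $z=-4\lambda$ and
$$
b_n=\frac{n+2}{n+1}\,\frac{L^{(1)}_{n}(z)}{L^{(1)}_{n+1}(z)}.
$$
Since $z<0$, every $L^{(1)}_k(z)$ is positive (all zeros lie in $(0,\infty)$ and the value at $0$ is positive), so $b_n$ is well defined. For the initial value, $L^{(1)}_0=1$ and $L^{(1)}_1(x)=2-x$, so
$$
b_0=\frac{2}{2+4\lambda}=a_0 .
$$
For the recurrence, write the three-term relation \eqref{ttrr} with $\alpha=1$ and index $n+1$:
$$
(n+2)L^{(1)}_{n+2}(z)=(2n+4+4\lambda)L^{(1)}_{n+1}(z)-(n+2)L^{(1)}_{n}(z).
$$
Dividing by $(n+2)L^{(1)}_{n+1}(z)$ and using $L^{(1)}_n/L^{(1)}_{n+1}=\frac{n+1}{n+2}\,b_n$ gives
$$
\frac{L^{(1)}_{n+2}(z)}{L^{(1)}_{n+1}(z)}=\frac{2n+4+4\lambda}{n+2}-\frac{n+1}{n+2}\,b_n .
$$
Therefore
$$
b_{n+1}=\frac{n+3}{n+2}\,\frac{L^{(1)}_{n+1}(z)}{L^{(1)}_{n+2}(z)}=\frac{n+3}{4\lambda+2(n+2)-(n+1)b_n}.
$$
This is exactly \eqref{rec_rel_an} with $n$ replaced by $n+1$. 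Since $a_n$ and $b_n$ satisfy the same first-order recurrence with the same initial value, induction gives $a_n=b_n$ for all $n\geqslant 0$. The only real work here is checking the indices: the coefficient $2(n+1)$ in \eqref{rec_rel_an} at step $n+1$ must match $2n+4$ coming from $2(n+1)+1+\alpha$ with $\alpha=1$, and it does.

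\textbf{The asymptotics.} Apply \eqref{asymp_expan} with $\alpha=\beta=1$, $j=-1$, $z=-4\lambda\in\mathbb{C}\setminus[0,\infty)$ and $d=2$. The prefactor $(-z/n)^{0}$ equals $1$. The first correction is
$$
U_1=\frac{2z(-2j)}{4\sqrt{-z}}=\frac{4z}{4\sqrt{-z}}=-\sqrt{-z}=-\sqrt{4\lambda},
$$
so
$$
\frac{L^{(1)}_{n-1}(z)}{L^{(1)}_{n}(z)}=1-\frac{\sqrt{4\lambda}}{\sqrt n}+\mathcal{O}\!\left(\frac1n\right).
$$
Now shift $n\to n+1$. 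Replacing $1/\sqrt{n+1}$ by $1/\sqrt n$ costs only $\mathcal{O}(n^{-3/2})$, and the factor $\frac{n+2}{n+1}=1+\mathcal{O}(1/n)$. Multiplying these two expansions yields \eqref{lim_an}.

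The main point to verify is the sign of $U_1$ when $j=-1$: the ratio must be less than $1$, consistent with $a_n<1$ from the previous lemma, and the computation above confirms this.
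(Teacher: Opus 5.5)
Your proof is correct and follows essentially the paper's route: the paper linearizes \eqref{rec_rel_an} through auxiliary polynomials $q_n(\lambda)$ with $a_n=(n+2)q_n/q_{n+1}$ and identifies $q_n=n!\,L^{(1)}_n(-4\lambda)$ by comparison with the Laguerre three-term recurrence. You instead substitute the closed form directly into \eqref{rec_rel_an} and check the initial value, which is the same computation without the intermediate sequence; your asymptotic step is the paper's ($\alpha=\beta=1$, $j=-1$, $d=2$), with the $U_1$ evaluation and the shift $n\to n+1$ written out explicitly.
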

\begin{proof}
First, we consider the sequence of polynomials in $\lambda$, $\{q_n(\lambda)\}_{n\geqslant 0}$, satisfying the recurrence relation
\begin{equation} \label{ttrr_qn}
q_{n+1}(\lambda) = [ 4\lambda +2(n+1)] q_{n}(\lambda) -n(n+1) q_{n-1}(\lambda), \quad n\geqslant 1,
\end{equation}
where  $q_{0}(\lambda) = 1 $ and $q_{1}(\lambda)= 4\lambda +2$.

We see, for $n=0$, that
$$
a_0 = \frac{2}{4 \lambda + 2} = \frac{2 \,q_{0}(\lambda)}{q_{1}(\lambda)}.
$$
Next, we assume
$$
a_{n-1} = \frac{(n+1) q_{n-1}(\lambda)}{q_{n}(\lambda)},
$$
Hence, from \eqref{rec_rel_an}, we have
\begin{align*}
%a_n = &  \frac{n+2}{4 \lambda + 2(n+1) - n a_{n-1}} \\
a_n = &  \frac{n+2}{4 \lambda + 2(n+1) - n  (n+1)\frac{ q_{n-1}(\lambda)}{q_{n}(\lambda)}
} \\
= &  \frac{(n+2)q_{n}(\lambda)}{[4 \lambda + 2(n+1)]q_{n}(\lambda) - n  (n+1) q_{n-1}(\lambda)}.
\end{align*}
and from \eqref{ttrr_qn} we conclude
$$
a_n  = \frac{(n+2) q_{n}(\lambda)}{q_{n+1}(\lambda)}, \quad n\geqslant 0.
$$

The three-term recurrence relation \eqref{ttrr} with $\alpha =1$, becomes
$$ (n+1)\,L_{n+1}^{(1)}(x) = [-x+ 2(n+1) ]L_{n}^{(1)}(x) - (n+1) \,L_{n-1}^{(1)}(x), \quad n\geqslant 1,
$$
with $ L_{0}^{(1)}(x) =1, $ and $ L_{1}^{(1)}(x) = -x+2.$
By comparing  with the three-term recurrence relation \eqref{ttrr_qn}, with $x=-4\lambda$, we obtain
$$
q_n(\lambda) =  n! \, L_{n}^{(1)}(-4\lambda), \quad
n \geqslant 0.
$$
Therefore, for  $n\geqslant 0$,
$$
a_n  = \frac{(n+2) q_{n}(\lambda)}{q_{n+1}(\lambda)}
=  \frac{n+2}{n+1}
\frac{L^{(1)}_{n}(-4\lambda)}{L^{(1)}_{n+1}(-4\lambda)},
$$
and \eqref{rat_func_an} holds.

Finally, \eqref{lim_an} follows from \eqref{rat_func_an} and \eqref{asymp_expan}
with $\alpha = \beta = 1, j = -1,$ and $d=2$.
\end{proof}

\medskip

From \eqref{rat_func_an}, we can deduce a full representation of 
polynomials $S_{n}(x)$ in terms of Laguerre polynomials
Using the expression for $a_n$ given in \eqref{rat_func_an}, relation \eqref{connect} can be written as follows:
$$
L_{n}^{(1)} (x)= S_{n}(x) +
\frac{n+1}{n}
\frac{L^{(1)}_{n-1}(-4\lambda)}{L^{(1)}_{n}(-4\lambda)}
S_{n-1}(x)
$$
and therefore
\begin{equation} \label{Sn-Ln}
\frac{S_{n}(x)L^{(1)}_{n}(-4\lambda)}{n+1}  = \frac{L_{n}^{(1)}(x)L^{(1)}_{n}(-4\lambda)}{n+1} -
\frac{S_{n-1}(x)L^{(1)}_{n-1}(-4\lambda)}{n}.
\end{equation}
Iterating this equality we conclude
\begin{equation} \label{Sn-Ln_full}
\frac{S_{n}(x)L^{(1)}_{n}(-4\lambda)}{n+1} =  \sum_{k=0}^{n} (-1)^{n-k} \frac{L_{k}^{(1)}(x)L^{(1)}_{k}(-4\lambda)}{k+1}.
\end{equation}

\medskip

This relation contains the key for some asymptotic properties.

\begin{theorem}
\begin{equation} \label{asymp_Sn}
\lim_{n\rightarrow+\infty}\frac{S_{n}(x)}{\sqrt{n+1}L^{(1)}_{n}(x)} = 0,
\end{equation}
uniformly in every closed domain of the complex plane with
empty intersection with $(0,+\infty)$.
\end{theorem}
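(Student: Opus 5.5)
The plan is to show that $S_n(x)/L^{(1)}_n(x)$ stays bounded, uniformly on the given set. The factor $1/\sqrt{n+1}$ then forces the limit \eqref{asymp_Sn} to be $0$.

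\textbf{Setup.} The starting point is the alternating representation \eqref{Sn-Ln_full},
$$
S_n(x)=\frac{n+1}{L^{(1)}_n(-4\lambda)}\sum_{k=0}^{n}(-1)^{n-k}b_k(x),\qquad b_k(x)=\frac{L^{(1)}_k(x)\,L^{(1)}_k(-4\lambda)}{k+1}.
$$
Since $b_n(x)=L^{(1)}_n(x)L^{(1)}_n(-4\lambda)/(n+1)$, it suffices to prove
$$
\Bigl|\sum_{k=0}^{n}(-1)^{n-k}b_k(x)\Bigr|\le C\,|b_n(x)|
$$
for $n$ large, uniformly in $x$ on the set. Exploiting the alternating sign is essential. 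A crude triangle inequality, combined with Perron's formula, only yields $O(\sqrt n\,|b_n|)$, which gives $S_n/L_n^{(1)}=O(\sqrt n)$ and is not enough.

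\textbf{Step 1 (Abel summation).} Regroup the alternating sum as
$$
\Bigl|\sum_{k=0}^{n}(-1)^{n-k}b_k\Bigr|\le |b_n|+\sum_{k=1}^{n}|b_k-b_{k-1}|.
$$
By \eqref{asymp_expan} with $\alpha=\beta=1$ and $j=-1$, applied both at $z=x$ and at $z=-4\lambda$, we get $b_{k-1}/b_k=1+\mathcal O(k^{-1/2})$ uniformly on compact subsets of $\mathbb C\setminus[0,\infty)$. Hence $|b_k-b_{k-1}|\le C k^{-1/2}|b_k|$ for $k\ge k_0$. The finitely many initial terms are bounded on compacts, so they are negligible against $|b_n|$, which grows.

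\textbf{Step 2 (Perron growth estimate).} Next I will use Perron's formula \eqref{Perron} to get
$$
|b_k(x)|\asymp k^{-1/2}\,e^{c(x)\sqrt k},\qquad c(x)=2\,\mathrm{Re}\sqrt{-x}+4\sqrt\lambda\ \ge 4\sqrt\lambda>0,
$$
with constants uniform in $x$. Comparing the sum with $\int^n t^{-1}e^{c\sqrt t}\,dt$, or with $\int^n t^{-1/2}e^{c\sqrt t}\,dt\approx (2/c)e^{c\sqrt n}$, gives
$$
\sum_{k\le n}k^{-1/2}|b_k|\le C\sum_{k\le n}k^{-1}e^{c\sqrt k}\le C'\,n^{-1/2}e^{c\sqrt n}\asymp |b_n|.
$$
This is exactly the bound needed, since $c(x)$ is bounded below by a positive constant.

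\textbf{Step 3 (conclusion).} Steps 1 and 2 together give $|S_n(x)|\le C|L^{(1)}_n(x)|$. Dividing by $\sqrt{n+1}\,|L^{(1)}_n(x)|$ yields \eqref{asymp_Sn}.

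\textbf{Main obstacle: uniformity near $x=0$ and near $\infty$.} Perron's formula and \eqref{asymp_expan} are only uniform on compact sets away from $[0,\infty)$, whereas the statement allows closed domains that touch $0$ and may be unbounded.

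Near $0$, I would first try Mehler--Heine / Hilb-type asymptotics for $L^{(1)}_k(x/k)$. These give $L^{(1)}_k(x)\sim k\,(\cdots)J_1$-type behaviour, and in particular $L_k^{(1)}(0)=k+1$ exactly. The hope is that they still yield the two facts actually used: $b_{k-1}/b_k=1+\mathcal O(k^{-1/2})$, and the growth $e^{4\sqrt{\lambda k}}$ coming from the factor $L^{(1)}_k(-4\lambda)$, which alone dominates.

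If that fails, I would fall back on the stated reading: the theorem is claimed on closed sets on which Perron's remainder is uniform, namely compact sets avoiding $(0,\infty)$. In that case only $x=0$ needs a direct check, which the explicit value $L^{(1)}_k(0)=k+1$ provides.
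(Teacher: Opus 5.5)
Your proposal is correct on compact subsets of $\mathbb{C}\setminus[0,\infty)$, which is also the effective scope of the paper's own argument, but it takes a different route.

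The paper never unrolls \eqref{Sn-Ln} completely. It applies it twice to get the two-step recursion $Y_n=\alpha_n+\beta_nY_{n-2}$ for $Y_n=\sqrt{n+1}\,S_n/L^{(1)}_n$. It then uses only the ratio asymptotics \eqref{asymp_expan}, to see that $\alpha_n$ is bounded and $|\beta_n|<1$ eventually. From this it gets $|Y_n|\le (n+1)C+K$, i.e.\ only that $u_n=S_n/(\sqrt{n+1}\,L^{(1)}_n)$ is bounded, and then ``takes limits'' in $L^{(1)}_n=S_n+a_{n-1}S_{n-1}$.

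You instead work with the fully unrolled alternating sum \eqref{Sn-Ln_full}. You extract its cancellation by pairing and sum the differences using two-sided Perron bounds $|b_k|\asymp k^{-1/2}e^{c(x)\sqrt k}$. This needs more input: Perron's formula as genuine upper \emph{and} lower bounds, plus an integral comparison. In return you get the stronger estimate $S_n=\mathcal{O}(L^{(1)}_n)$, from which the limit follows at once with rate $\mathcal{O}(n^{-1/2})$.

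That is a real advantage. The paper's last step, read literally, only gives $u_n+a_{n-1}\sqrt{n/(n+1)}\,\bigl(L^{(1)}_{n-1}(x)/L^{(1)}_n(x)\bigr)u_{n-1}\to 0$, i.e.\ $u_n+u_{n-1}\to0$. That does not force $u_n\to0$ unless the limit is already known to exist. Your estimate is the non-recursive counterpart of the sharper contraction $|\beta_n|\le 1-c(x)/\sqrt n+\mathcal{O}(1/n)$ that \eqref{asymp_expan} actually provides, with the same constant $c(x)=2\,\mathrm{Re}\sqrt{-x}+4\sqrt\lambda$. Fed into the paper's recursion, that contraction would give $|Y_n|=\mathcal{O}(\sqrt n)$ and close the step.

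Two minor remarks.

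In Step~2, only the comparison with $\int^n t^{-1}e^{c\sqrt t}\,dt\approx \frac{2}{c\sqrt n}e^{c\sqrt n}$ is the right one. The integral $\int^n t^{-1/2}e^{c\sqrt t}\,dt$ is $\sqrt n$ times larger and corresponds to the crude bound you rightly discard.

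Your direct check at $x=0$ settles only the point $0$. There $b_k(0)=L^{(1)}_k(-4\lambda)$ is positive and increasing, so the alternating sum lies in $[0,b_n]$ and $0\le S_n(0)\le L^{(1)}_n(0)$. It does not give uniformity on a neighbourhood of $0$, because \eqref{asymp_expan} and Perron's formula are not uniform near $0$. The paper's proof has exactly the same limitation, so this is not a defect relative to it.
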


\begin{proof}
Using \eqref{Sn-Ln} twice we get
\begin{align*}
\frac{S_{n}(x)}{L^{(1)}_{n}(x)}  &= 1 -\frac{n+1}{n}
\frac{L^{(1)}(x)L^{(1)}_{n-1}(-4\lambda)}{L_{n}^{(1)}(x)L^{(1)}_{n}(-4\lambda)}+ \frac{n+1}{n-1}\frac{S_{n-2}(x)L^{(1)}_{n-2}(-4\lambda)}{L_{n}^{(1)}(x)L^{(1)}_{n}(-4\lambda)}.
\end{align*}
Let us define 
$$
Y_n(x) := \sqrt{n+1} \frac{S_{n}(x)}{L^{(1)}_{n}(x)},
$$
then, we can write
$$
Y_n(x) = \alpha_n(x) + \beta_n(x) Y_{n-2}(x),
$$
with
\begin{align*}
\alpha_n(x) &= \sqrt{n+1}\left[1 -\frac{n+1}{n}
\frac{L^{(1)}(x)L^{(1)}_{n-1}(-4\lambda)}{L_{n}^{(1)}(x)L^{(1)}_{n}(-4\lambda)}\right],\\ 
\beta_n(x)& = \frac{\sqrt{n+1}}{\sqrt{n-1}}\frac{n+1}{n-1}\frac{L^{(1)}_{n-2}(x)L^{(1)}_{n-2}(-4\lambda)}{L_{n}^{(1)}(x)L^{(1)}_{n}(-4\lambda)}.
\end{align*}
Obviously $\alpha_n(x)$ converges to $\sqrt{-x}+\sqrt{4\lambda}$ uniformly on compact domains of 
the complex plane with empty intersection with $(0,+\infty)$, and therefore there exists a positive constant $C$ (depending on $\lambda$ and the compact domain) such that $|\alpha_n(x)| < C$. On the other hand, $\beta_n(x)$ converges to $1$, but again from \eqref{asymp_expan} we can conclude that there exists a positive integer number $n_0$ such that $|\beta_n| < 1$ for $n\geqslant n_0$. In this way, we can write
$$
|Y_n(x)| \leqslant C + |Y_{n-2}(x)|, \quad n\geqslant n_0,
$$
and we can conclude $|Y_n(x)| \leqslant (n+1) C + K$ for some positive constant $K$, uniformly on every compact subset of $\mathbb{C}\setminus(0,+\infty)$. Thus, the sequence
$$
\frac{1}{n+1}Y_n(x) = \frac{1}{\sqrt{n+1}} \frac{S_{n}(x)}{L^{(1)}_{n}(x)}
$$
is uniformly bounded and \eqref{asymp_Sn} follows taking limits in the identity
$$
\frac{1}{\sqrt{n+1}} = \frac{1}{\sqrt{n+1}} \frac{S_{n}(x)}{L^{(1)}_{n}(x)} +  \frac{1}{\sqrt{n+1}} a_{n-1}\frac{S_{n-1}(x)}{L^{(1)}_{n}(x)}.
$$
\end{proof}

\medskip

The following result provides a generating function for the Laguerre--Sobolev orthogonal polynomials.

\begin{proposition}
\begin{equation} \label{gen_fun}
\sum_{n=0}^{\infty} \frac{S_{n}(x)L^{(1)}_{n}(-4\lambda)}{n+1} \omega^n  =
\frac{1}{1-\omega^2} e^{- \frac{(x-4 \lambda)\omega}{1-\omega}} \frac{1}{2\sqrt{x\lambda\omega}} J_{1} \left(  \frac{4\sqrt{x\lambda\omega)}}{1-\omega}\right).
\end{equation}
\end{proposition}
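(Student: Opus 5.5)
The plan is to combine the full connection formula \eqref{Sn-Ln_full} with the Hardy--Hille formula of Theorem~\ref{lema_gen}. Write $b_n := S_n(x)L^{(1)}_n(-4\lambda)/(n+1)$ and $c_k := L^{(1)}_k(x)L^{(1)}_k(-4\lambda)/(k+1)$. Then \eqref{Sn-Ln} reads $b_n + b_{n-1} = c_n$, with $b_0=c_0$, and \eqref{Sn-Ln_full} gives $b_n=\sum_{k=0}^n(-1)^{n-k}c_k$.

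At the level of formal power series this is a Cauchy product:
\[
\sum_{n\ge0} b_n\omega^n = \frac{1}{1+\omega}\sum_{k\ge0} c_k\omega^k .
\]
Equivalently, one can multiply $b_n+b_{n-1}=c_n$ by $\omega^n$ and sum.

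The next step is to evaluate $\sum_k c_k\omega^k$ by Theorem~\ref{lema_gen} with $\alpha=1$ and $y=-4\lambda$. Since $\binom{k+1}{k}^{-1}=1/(k+1)$, the left side of \eqref{gen_fun_Laguerre} is exactly $\sum_k c_k\omega^k$. The right side becomes
\[
\frac{1}{1-\omega}\,e^{-\frac{(x-4\lambda)\omega}{1-\omega}}\,(4x\lambda\omega)^{-1/2} J_1\!\left(\frac{2(4x\lambda\omega)^{1/2}}{1-\omega}\right).
\]
Here $(-xy\omega)=4x\lambda\omega$, so $(4x\lambda\omega)^{-1/2}=\frac{1}{2\sqrt{x\lambda\omega}}$ and $2(4x\lambda\omega)^{1/2}=4\sqrt{x\lambda\omega}$. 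Multiplying by $1/(1+\omega)$ and using $(1-\omega)(1+\omega)=1-\omega^2$ gives exactly \eqref{gen_fun}.

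The only point needing care, and the main (mild) obstacle, is justifying the manipulations analytically rather than formally. I would argue as follows.
\begin{itemize}
\item The Hardy--Hille series converges absolutely for $|\omega|<1$. This follows because $L^{(1)}_k(x)L^{(1)}_k(-4\lambda)/(k+1)$ grows subexponentially: by Perron's formula \eqref{Perron}, $L^{(1)}_k(-4\lambda)\sim C k^{1/4}e^{4\sqrt{k\lambda}}$, and $L^{(1)}_k(x)$ is polynomially bounded for fixed real $x>0$ (and subexponential otherwise).
\item The Cauchy product of two absolutely convergent series for $|\omega|<1$ is then legitimate.
\end{itemize}

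It also remains to check the branch conventions. The function $z^{-1/2}J_1(2\sqrt{z}\,\cdot)$ is entire in $z$, being an even power series in $\sqrt z$ divided by $\sqrt z$. Hence the choice of square roots in \eqref{gen_fun} is immaterial, and the identity holds for all $x$ and all $|\omega|<1$, extending to $x$ or $\omega$ where $x\lambda\omega\le 0$ by analytic continuation.
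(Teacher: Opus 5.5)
Your proposal is correct and follows the same route as the paper. You write \eqref{Sn-Ln_full} as a Cauchy product with $\sum(-\omega)^n$ to get the factor $1/(1+\omega)$, then evaluate $\sum_k c_k\omega^k$ by the Hardy--Hille formula with $\alpha=1$, $y=-4\lambda$; your remarks on absolute convergence via Perron's formula and on the irrelevance of the square-root branch just make explicit what the paper's one-line convergence justification leaves implicit.
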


\begin{proof}
For $|\omega| < 1$, multiply \eqref{Sn-Ln_full} times $\omega^n$ to obtain
$$
\frac{S_{n}(x)L^{(1)}_{n}(-4\lambda)}{n+1} \omega^n =  \sum_{k=0}^{n} (-\omega)^{n-k} \frac{L_{k}^{(1)}(x)L^{(1)}_{k}(-4\lambda)}{k+1} \omega^k.
$$
Hence, the series 
$$
\sum_{n=0}^{\infty} \frac{S_{n}(x)L^{(1)}_{n}(-4\lambda)}{n+1} \omega^n
$$
converges for $|\omega| < 1$ since it is the Cauchy product of two convergent series. 
Moreover, from \eqref{gen_fun_Laguerre} we obtain
$$
\sum_{n=0}^{\infty} \frac{S_{n}(x)L^{(1)}_{n}(-4\lambda)}{n+1} \omega^n  =
\frac{e^{- \frac{(x-4 \lambda)\omega}{1-\omega}}}{(1+\omega)(1-\omega)}  \frac{1}{2\sqrt{x\lambda\omega}} J_{1} \left(  \frac{4\sqrt{x\lambda\omega)}}{1-\omega}\right)
$$
and \eqref{gen_fun} holds.
\end{proof}

%%%%%%%%%%%%%%%%%%%%%%%%%%%%%%%%%%%%%%%%%%%%
\section{
The diagonalized spectral method for BVP
}
%%%%%%%%%%%%%%%%%%%%%%%%%%%%%%%%%%%%%%%%%%%%

Let us consider the Fourier-Sobolev expansion of the solution of the BVP \eqref{BVP} in terms of the orthogonal sequence $\{ S_{n}(x) x e^{-x/2}\}_{n\geqslant0}$
$$
\sum_{n=0}^{+\infty} \widehat{u}_{n}  S_{n}(x) x e^{-x/2},
$$
here
$$
\widehat{u}_n
=\frac{\langle u,  S_{n}(x) x e^{-x/2}\rangle_\lambda}{\| S_{n}(x) x e^{-x/2}\|^2_\lambda}.
$$
%On the one hand, since
%$$\lim_{x\to 0} u(x) = \lim_{x\to +\infty} u(x)= 0,$$
%there exists a function $v(x)$ such that $u(x)= v(x)  x e^{-x/2}$. Then, using \eqref{s1}, we obtain
%$$
%\widehat{u}_{n} =  \frac{\langle  v  x e^{-x/2}, S_n  x e^{-x/2}\rangle_\lambda}{\|S_n  x e^{-x/2}\|^2_\lambda}
%=\frac{\langle v, S_n\rangle_S}{\|S_n\|^2_S} = \widehat{v}_{n},
%$$
%where $\widehat{v}_{n}$ is the $n$th coefficient of the Fourier expansion of $v$ in terms of the Sobolev orthogonal basis $\{S_{n}(x)\}_{n\geqslant0},$ given by
%$$
%\sum_{n=0}^{+\infty} \widehat{v}_{n} S_{n}(x).
%$$

Again, integration by parts gives
\begin{align*}
\widehat{u}_n {\| S_{n}(x) x e^{-x/2}\|^2_\lambda}
=& \lambda \int_{0}^{+\infty} u(x) S_n(x) \frac{1}{x} x e^{-x/2}dx + \int_{0}^{+\infty} u'(x)[S_n(x) x e^{-x/2}]'dx\\
=&\int_{0}^{+\infty} [-u''(x)+\lambda \frac{1}{x} u(x)] S_n(x) x e^{-x/2}dx\\
=&\int_{0}^{+\infty} f(x) S_n(x) x e^{-x/2}dx.
\end{align*}
Therefore to compute  the Fourier-Sobolev coefficients of the solution $u(x)$ of the boundary value problem we do not need to solve any system of linear equations, we need just to compute the integrals
\[  f(n) := \int_{0}^{+\infty} f(x) S_n(x) x e^{-x/2}dx,
\]
and the Sobolev norms \(s(n) = \| S_{n}(x) x e^{-x/2}\|^2_\lambda\).

A  method to recursively generate the sequence $\{{f}(n)\}_{n\geqslant0}$ can be deduced from the conexion formula betweeen Laguerre and Laguerre--Sobolev polynomials, in fact, if we denote
\[
g(n):= \int_{0}^{+\infty} f(x) L^{(1)}_{n}(x) x e^{-x/2}dx
\]
from \eqref{conechat} we get
\[
g(n) = f(n) + {a}_{n-1}  f(n-1),
\]
which provides the announced recursive method assuming the initial condition \[f(0) = g(0) = \int_{0}^{+\infty} f(x) x e^{-x/2}dx.\]

The Sobolev norms \(s(n)\) satisfy a similar recurrence relation. From \eqref{conechat} we get
\[
\| L^{(1)}_{n}(x) x e^{-x/2}\|^2_\lambda = s(n) + {a}_{n-1}^2  s(n-1),
\]
and using \eqref{s1} we conclude  the recurrence
\[
s(n) = (n+1)\left(\lambda+\frac{n+1}{2}\right) - {a}_{n-1}^2  s(n-1),
\]
with initial condition \(s(0) = \lambda + 1/2.\)

\medskip

In general, the partial sums of the Laguerre-Sobolev expansion of the solution of the BVP \eqref{BVP} can be obtained recursively.
Let us denote
\[
\mathcal{S}_n(u,x) = \sum_{k=0}^n \widehat{u}_k S_{k}(x) x e^{-x/2}.
\]
With the initial conditions
\begin{align}
a_0 &= \frac{1}{2 \lambda + 1}, \nonumber\\
f(0) &= g(0) = \int_{0}^{+\infty} f(x) x e^{-x/2}dx,\nonumber\\
s(0) &= \lambda + 1/2,\label{initial_cond}\\
S_0(x)&= 1, \nonumber\\
\mathcal{S}_0(u,x) &= x e^{-x/2}, \nonumber 
\end{align}
the recurrence can be performed as follows
\begin{align}
a_n &= \frac{n+2}{4 \lambda + 2(n+1) - n a_{n-1}}, \nonumber\\
g(n) &= \int_{0}^{+\infty} f(x) L^{(1)}_{n}(x) x e^{-x/2}dx, \nonumber\\
f(n) &= g(n) - {a}_{n-1}  f(n-1), \nonumber\\
s(n) &= (n+1)\left(\lambda+\frac{n+1}{2}\right) - {a}_{n-1}^2  s(n-1),\label{recurrence}\\
S_n(x)&= L^{(1)}_{n}(x) - {a}_{n-1} S_{n-1}(x), \nonumber\\
\mathcal{S}_n(u,x) &= \mathcal{S}_{n-1}(u,x) + \frac{f(n)}{s(n)} S_n(x) x e^{-x/2}. \nonumber
\end{align}
Notice that in this computation the only integrals are the values of $g(n)$, which simply depends on $f(x)$ and $L^{(1)}_{n}(x)$. Exactly the same integrals we need to compute in a spectral method based in Laguerre polynomials.

%%%%%%%%%%%%%%%%%%%%%%%%%%%%%%%%%%%%%%%%%%%%
\section{
Numerical experiment
}
%%%%%%%%%%%%%%%%%%%%%%%%%%%%%%%%%%%%%%%%%%%%

In this section, we examine some numerical experiments aimed at assessing the reliability and accuracy of the Sobolev spectral method for addressing Dirichlet boundary problems over the interval \((0,+\infty)\).

We focus on a second-order Dirichlet boundary value problem linked to the non-homogeneous Schrödinger equation with a potential given by \(V(x) = 1/x\)
\begin{align*}
&- u''(x) + \lambda \frac{1}{x} u(x) = f(x), \\
& u(0) = u(+\infty) = 0,
\end{align*}
and $\lambda = 1$.

According to Boyd \cite{Boyd01}, the convergence of an approximation developed through Laguerre polynomials is influenced by the behavior of the solution as it approaches infinity. Therefore, we can not predict a better performance for the Laguerre--Sobolev approximants in that sense. But as we can expect, spectral convergence of an approximation developed through Laguerre--Sobolev polynomials is achieved for solutions that exhibit exponential decay towards zero. To illustrate this, we analyze the case where $f(x)= e^{-x} (3 \cos{x} - 2 (-1 + x) \sin{x})$. Notice that the solution to the boundary value problem is $u(x) = x \cos{x} e^{-x}$.

The sequence of orthogonal functions $\{ S_{n}\, x\,e^{-x/2}\}_{n\geqslant0}$ orthogonal with respect to the Sobolev inner product
$$
\langle u, v\rangle_\lambda = \int_{0}^{+\infty}\lambda \frac{1}{x} u(x)\,v(x)\,dx + \int_{0}^{+\infty} u'(x)\,v'(x)\,dx
$$
can be easily computed applying the recurrence \eqref{initial_cond} and \eqref{recurrence}. In particular, the first five Sobolev polynomials are

\begin{align*}
S_0(x) &= 1,\\
S_1(x) &= \frac{5}{3}-x,\\
S_2(x) &= \frac{x^2}{2}-\frac{60 x}{23}+\frac{54}{23},\\
S_3(x) &= -\frac{x^3}{6}+\frac{189 x^2}{106}-\frac{258
   x}{53}+\frac{158}{53},\\
S_4(x) &= \frac{x^4}{24}-\frac{1285 x^3}{1701}+\frac{25 x^2}{6}-\frac{1460 x}{189}+\frac{2045}{567}.
\end{align*}

Let us denote by
$$
\sum_{n=0}^{+\infty} \hat{u}_{n} S_{n}\, x\,e^{-x/2}
$$
the Fourier--Sobolev expansion of $u(x)$, where the coefficients $\hat{u}_{n}$ satisfy
\begin{align*}
\hat{u}_n \|S_{n}\, x\,e^{-x/2}\|_\lambda^2
=& \int_{0}^{+\infty}\lambda \frac{1}{x} u(x) S_{n}\, x\,e^{-x/2} dx + \int_{0}^{+\infty} u'(x)[\int_{0}^{+\infty}]'dx\\
=&\int_{0}^{+\infty} [-u''(x)+ \lambda \frac{1}{x} u(x)]S_{n}\, x\,e^{-x/2}dx\\
=&\int_{0}^{+\infty} f(x) S_{n}\, x\,e^{-x/2} dx.
\end{align*}

The partial sums of the Laguerre--Sobolev expansion of the solution of the BVP can be obtained using the recurrence 
described in \eqref{recurrence}, with initial conditions \eqref{initial_cond}.

\begin{figure}[ht]
\centerline{\includegraphics[scale=0.75]{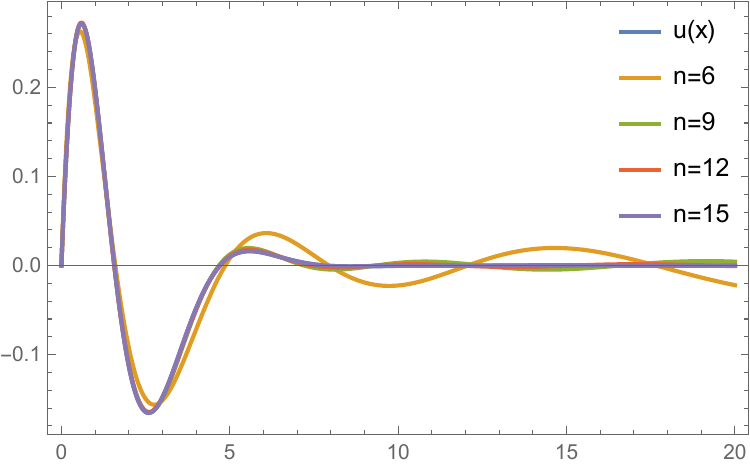}}
\caption{\label{fig1}The solution $u(x)$ and the Fourier-Sobolev partial sums $\mathcal{S}_n(u,x)$ for $n = 6, 9, 12, 15$.}
\end{figure}

In Figure~\ref{fig1}, we plot the solution $u(x)$ and the  corresponding Fourier--Sobolev partial sums
\[
\mathcal{S}_n(u,x) = \sum_{k=0}^n \widehat{u}_k S_{k}(x) x e^{-x/2},
\]
for $n = 6, 9, 12, 15$.

\begin{figure}[ht]
\centerline{\includegraphics[scale=0.75]{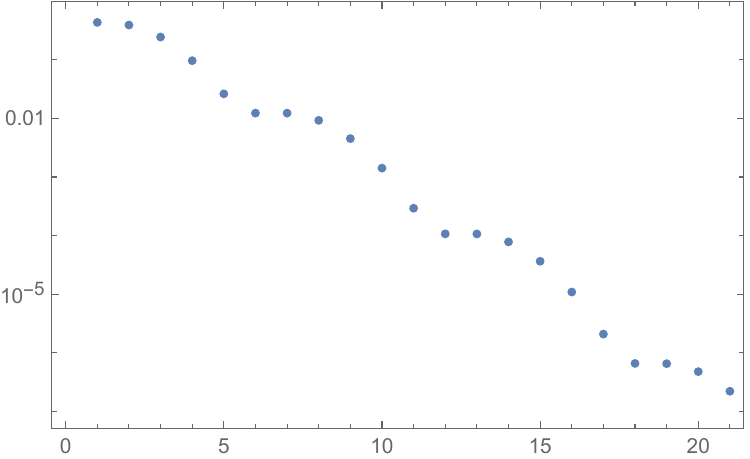}}
\caption{\label{fig2}A logarithmic plot of errors for $n = 0, 1, \ldots, 20$.}
\end{figure}

In Figure~\ref{fig2}, we show a logarithmic plot of the square of the errors in the Sobolev norm
$$ \epsilon_n = \|u(x)-\mathcal{S}_n(u,x)\|_\lambda^2,$$
for $n = 0, 1, \ldots, 20$. The plot clearly indicates an exponential convergence rate.
Observe that this happens not only for the Fourier-Sobolev partial sums but also for their first derivatives.

\medskip

In our following example we consider a solution that does not decay exponentially to infinity
\[
u(x) = 10 \frac{x \cos{x}}{(x + 1)^3}.
\]
In this case
\[
f(x) = 10 \frac{(7 + x (-3 + x (3 + x))) \cos{x} - 2 (-1 + x + 2 x^2) \sin{x}}{(x + 1)^5}.
\]
Again, we have computed the Laguerre--Sobolev approximants 
for $n = 0, 1, \ldots, 20$. 

\begin{figure}[ht]
\centerline{\includegraphics[scale=0.75]{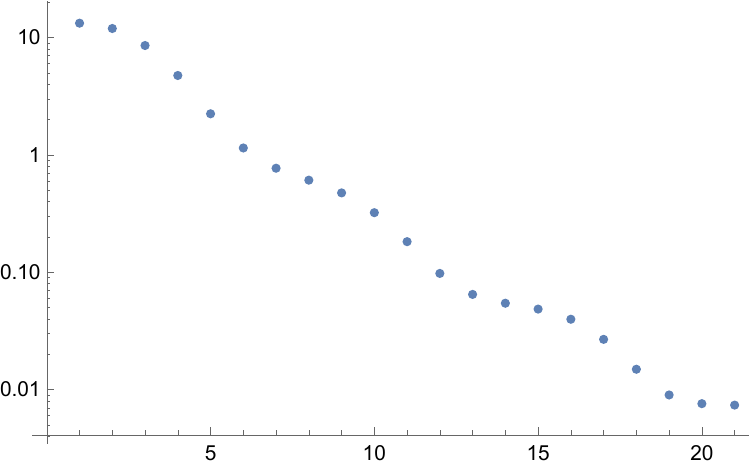}}
\caption{\label{fig3}A logarithmic plot of errors for $n = 0, 1, \ldots, 20$ for a solution with rational decay.}
\end{figure}

In Figure~\ref{fig3}, we show a logarithmic plot of the square of the errors in the Sobolev norm $$ \epsilon_n = \|u(x)-\mathcal{S}_n(u,x)\|_\lambda^2,$$
for $n = 0, 1, \ldots, 20$. The graph clearly indicates worse performance from the approximants.

%%%%%%%%%%%%%%%%%%%%%%%%%%%%%%%%%%%%%%%%%%%%%%%%%%%%%%%%%%%%%%%%%%%%%

\end{document}